\numberwithin{equation}{section}
\newtheorem{theorem}{Theorem}[section]
\newtheorem{corollary}{Corollary}[theorem]
\newtheorem{lemma}[theorem]{Lemma}
\begin{document}
\author{Alexander E. Patkowski}
\title{A note on Some Partitions Related to Ternary Quadratic Forms}

\maketitle
\begin{abstract} We offer some partition functions related to ternary quadratic forms, and note on their upper bounds and related properties. We offer these results as an application of a simple method related to conjugate Bailey pairs presented in a prior study, further illustrating its utility. \end{abstract}

\keywords{\it Keywords: \rm ternary quadratic forms; partitions; $q$-series.}

\subjclass{ \it 2010 Mathematics Subject Classification 05A17, 11E20}

\section{Introduction}
We write a positive ternary quadratic form as
\begin{equation}Q(x_1,x_2,x_3)=ax_1^2+bx_2^2+cx_3^2+rx_2x_3+sx_3x_1+tx_1x_2,\end{equation}
and say it is primitive if $gcd(a,b,c,r,s,t)=1.$ We recall some elementary facts about $Q(x,y,z).$ We first consider $(a,b,c,r,s,t)=(1,1,1,0,0,0).$ The number $S(X)$ of integers less than or equal to $X$ which are representable as a sum of three squares is $\sim\frac{5}{6}X.$ (See [8] for an interesting discussion on this result and related material.) Here we used the common notation $F(x) \sim G(x)$ to mean that $F(x)$ is asymptotically equivalent to $G(x),$ or $\lim_{x\to \infty}F(x)/G(x)=1.$  We will also use Vinogradov's $F(x) \ll G(x)$ to mean that $|F(x)|\le \sigma G(x)$ for some constant $\sigma\ge0.$ In general, it is known (e.g.[3, eq.(1)--(2)],[9]), that \begin{equation}\#\{n\le X: n=f(x,y,z) \}\sim \sigma' X,\end{equation} where $\sigma'\in(0,1].$ In the case of binary quadratic forms, Bernay's result (see [2]) says that \begin{equation}\#\{n\le X: n=ax^2+by^2+cxy \}\sim \lambda\frac{X}{\sqrt{\log X}},\end{equation}
for a positive constant $\lambda.$ 
\par In [6] we find $\ll$ results for partition functions related to (1.3), offering some interesting applications of $q$-series arising from Bailey pairs associated with indefinite binary quadratic forms. A simple method used in [7], which uses a definition of a Bailey pair in conjunction with conjugate Bailey pairs, produced the tools needed to obtain $q$-series related to ternary quadratic forms. The motivation of this paper is to obtain partition functions which are related to ternary forms, thereby extracting related properties.
\par Define for $q=e^{2\pi iz}$ the theta function [4, pg.245]
$$T(z):=\sum_{(x_1,x_2,x_3)\in\mathbb{Z}^3}q^{Q(x_1,x_2,x_3)},$$ representing a holomorphic function on the half-plane $\mathfrak{H}:=\{z\in\mathbb{C}: \Im(z)>0\}.$
Recall that $T(z)$ is a modular form of weight $\frac{3}{2}$ on some group $\Gamma_0(N),$ that has the decomposition [4, pg.260--261]
\begin{equation}T(z)=E(z)+C(z),\end{equation}
where $E(z)$ is an Eisenstein series [4, pg.240], and $C(z)$ a cusp form. A cusp form is a modular form wherein its initial coefficient in its Fourier series expansion is $0.$ It is also well-known [4, Theorem 4(i), eq.(13), $k=\frac{3}{2}$] in the ternary case, that the Fourier coefficients of $E(z)$ are $\ll n^{1/2},$ and the Fourier coefficients of $C(z)$ are $\ll n^{3/4}.$ Hence the triangle inequality implies the coefficients of $T(z)$ are $\ll n^{3/4}.$ This observation in relation to our weighted ternary expansions allow us to make related statements on the partition functions presented in Theorem 3.2.

\section{Bailey's Lemma and identities}
Here we discuss the analytic tools used to obtain our generating functions. Put [5] $(Z;q)_n=(Z)_n=(1-Z)(1-Zq)\cdots(1-Zq^{n-1}).$ Bailey [1] introduced the idea of a pair of sequences $(\alpha_n, \beta_n),$ relative to $a,$ which satisfy
\begin{equation}\beta_n(a,q)=\beta_n=\sum_{r\ge0}\frac{\alpha_r(a,q)}{(aq)_{n+r}(q)_{n-r}}.\end{equation}
Further formulas needed for our study are a specialization of Bailey's lemma [1] (relative to $a=q$ and subject to convergence)
\begin{equation}\sum_{n\ge0}(q)_{n}(-1)^n\beta_nq^{n(n+1)/2}=(1-q)\sum_{n\ge0}(-1)^nq^{n(n+1)/2}\alpha_n,\end{equation}
and the pairs [7, eq(3.5)] $(\alpha_n(q^2,q^2), \beta_n(q^2,q^2)),$
\begin{equation}\beta_{n}(q^2,q^2)=\frac{q^{-n}}{(-q)_{2n}},\end{equation}
\begin{equation}\alpha_{n}(q^2,q^2)=(-1)^nq^{n(n-1)}\frac{(1-q^{4n+2})}{1-q^2}\sum_{m\ge0}q^{m(m+1)/2}\sum_{2|j|\le m}(-1)^jq^{-j(j-1)+2nj},\end{equation}
and [7, eq(3.6)] $(\alpha_n, \beta_n),$

\begin{equation}\beta_{n}=\frac{(q)_n(-1)^nq^{n(n-1)/2}}{(q)_{2n}},\end{equation}
\begin{equation}\alpha_{n}=(-1)^nq^{n(n-1)/2}\frac{(1-q^{2n+1})}{1-q}\sum_{m\ge0}(-1)^mq^{m(3m+1)/2}(1-q^{2m+1})\sum_{|j|\le m}(-1)^jq^{-j(j-1)/2+nj}.\end{equation}
Inserting (2.3)--(2.4) and (2.5)--(2.6) into (2.2) gives us the following lemma containing our needed identities, which will be used in the next section.
\begin{lemma} We have for $|q|<1,$
\begin{equation}\sum_{n\ge0}\frac{(q^2;q^2)_n(-1)^nq^{n^2}}{(-q)_{2n}}=\sum_{n\ge0}q^{2n^2}(1-q^{4n+2})\sum_{m\ge0}q^{m(m+1)/2}\sum_{2|j|\le m}(-1)^jq^{-j(j-1)+2nj},\end{equation}
and
\begin{equation}\sum_{n\ge0}\frac{(q)_nq^{n^2}}{(q^{n+1})_{n}}=\sum_{n\ge0}q^{n^2}(1-q^{2n+1})\sum_{m\ge0}(-1)^mq^{m(3m+1)/2}(1-q^{2m+1})\sum_{|j|\le m}(-1)^jq^{-j(j-1)/2+nj}.\end{equation}

\end{lemma}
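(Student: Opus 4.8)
\medskip

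\textbf{Proof proposal.} The plan is to substitute each of the two cited Bailey pairs directly into the specialized form of Bailey's lemma (2.2) and then simplify the resulting exponents. The only genuine input beyond bookkeeping is that the sequences in (2.3)--(2.4) and in (2.5)--(2.6) really are Bailey pairs, i.e.\ satisfy (2.1): the first relative to $a=q^2$ with base $q^2$, the second relative to $a=q$ with base $q$. This is recorded in [6], so (2.2) applies to each. Everything takes place for $|q|<1$ (equivalently, as an identity of formal power series in $q$), so no analytic subtlety arises.

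For (2.7) I would first rewrite (2.2) with $q$ replaced by $q^2$, since $(\alpha_n(q^2,q^2),\beta_n(q^2,q^2))$ is a pair relative to $a=q^2$ and base $q^2$; this turns the left side into $\sum_{n\ge0}(q^2;q^2)_n(-1)^n\beta_n(q^2,q^2)q^{n(n+1)}$ and the right side into $(1-q^2)\sum_{n\ge0}(-1)^nq^{n(n+1)}\alpha_n(q^2,q^2)$. Inserting (2.3), the factor $q^{-n}q^{n(n+1)}=q^{n^2}$ collapses the left side to $\sum_{n\ge0}\frac{(q^2;q^2)_n(-1)^nq^{n^2}}{(-q)_{2n}}$, the left-hand side of (2.7). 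Inserting (2.4), the two signs $(-1)^n$ cancel, the factor $1-q^2$ cancels the denominator in $\alpha_n(q^2,q^2)$, and $q^{n(n+1)}q^{n(n-1)}=q^{2n^2}$, giving the right-hand side of (2.7) verbatim.

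For (2.8) I would apply (2.2) as stated, since (2.5)--(2.6) is a pair relative to $a=q$, base $q$. Inserting (2.5), the left side becomes $\sum_{n\ge0}\frac{(q)_n^2q^{n(n-1)/2}q^{n(n+1)/2}}{(q)_{2n}}$; here $q^{n(n-1)/2}q^{n(n+1)/2}=q^{n^2}$, and the factorization $(q)_{2n}=(q)_n(q^{n+1};q)_n$ yields $(q)_n^2/(q)_{2n}=(q)_n/(q^{n+1})_n$, which is the left-hand side of (2.8). Inserting (2.6) on the right, the signs $(-1)^n$ cancel, the factor $1-q$ cancels the denominator in $\alpha_n$, and $q^{n(n+1)/2}q^{n(n-1)/2}=q^{n^2}$, producing the right-hand side of (2.8).

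The computation is essentially routine; the points needing a little care are the base change $q\mapsto q^2$ in the first identity, the collapse $(q)_n^2/(q)_{2n}=(q)_n/(q^{n+1})_n$ in the second, and the tracking of the quadratic exponents $n(n\pm1)/2$ and $n(n\pm1)$. The only real ``obstacle'' is conceptual rather than technical: one must be confident that the pairs quoted from [6] genuinely satisfy the defining relation (2.1), after which Bailey's lemma (2.2) does all of the work.

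\medskip
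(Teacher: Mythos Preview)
Your proposal is correct and follows exactly the approach indicated in the paper, which simply states that one inserts the Bailey pairs (2.3)--(2.4) and (2.5)--(2.6) into the specialization (2.2); you have merely supplied the routine algebraic bookkeeping (the base change $q\mapsto q^2$, the exponent collapses $q^{n(n+1)}q^{-n}=q^{n^2}$ and $q^{n(n+1)/2}q^{n(n-1)/2}=q^{n^2}$, and the factorization $(q)_{2n}=(q)_n(q^{n+1})_n$) that the paper leaves implicit. There is nothing to add or correct.
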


\section{Partitions} We are concerned with one main generating function from which our partition theorems will follow.
\begin{lemma} Let $A_{k,m}(n)$ be the number of partitions of $n$ where: (i) $k$ appears either once or twice. (ii) All parts $< k$ appear at least twice and at most thrice. (iii) $m$ is the number of parts $> k$ and $\le 2k.$ (iv) All parts are $\le 2k,$ and parts that are $\ge k+1$ and $\le 2k$ may appear any number of times. Further, let $\bar{A}_{k,m}(n)$ be those partitions counted by $A_{k,m}(n)$ with number of parts that are $\le k$ odd minus those with number of parts that are $\le k$ even. Then,
\begin{equation}\sum_{n,m\ge0}\bar{A}_{k,m}(n)a^mq^n=\frac{(q)_kq^{1+1+2+2+\cdots +(k-1)+(k-1)+k}}{(1-aq^{k+1})(1-aq^{k+2})\cdots(1-aq^{2k})}=\frac{(q)_kq^{k(k+1)/2+k(k-1)/2}}{(aq^{k+1})_{k}}.\end{equation}
\end{lemma}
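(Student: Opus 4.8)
The plan is to read the right-hand side of the claimed identity as a product of elementary generating functions, one block for each admissible range of part sizes, and then recover the combinatorial description of each factor. Write a partition $\pi$ that is counted (with sign) by $\bar{A}_{k,m}$ as the disjoint union of its parts that are $\le k$ together with its parts lying in $\{k+1,\dots,2k\}$; condition (iv) guarantees nothing else occurs. The sign that $\bar{A}$ attaches to $\pi$ depends only on the number of parts that are $\le k$, while the exponent of $a$ depends only on the number of parts in $\{k+1,\dots,2k\}$, so the bivariate generating function splits as the product of the contributions of these two blocks.

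For the block of large parts: by (iv) a part $i$ with $k+1\le i\le 2k$ may be repeated $t_i\ge 0$ times, and by (iii) each occurrence is recorded by a factor $a$; hence this block contributes $\prod_{i=k+1}^{2k}\sum_{t_i\ge 0}(aq^i)^{t_i}=\prod_{i=k+1}^{2k}(1-aq^i)^{-1}=1/(aq^{k+1})_k$, which is the denominator in the claim. For the block of small parts: by (ii) each $j$ with $1\le j\le k-1$ occurs $t_j\in\{2,3\}$ times, and by (i) the part $k$ occurs once or twice — the explicit last summand $+k$ in the exponent $1+1+2+2+\cdots+(k-1)+(k-1)+k$ is exactly the record that a copy of $k$ is present — so $t_k\in\{1,2\}$. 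The number $N=\sum_{j=1}^{k}t_j$ of parts $\le k$ then satisfies $N\ge 2(k-1)+1=2k-1$, which is odd for the minimal configuration, so the sign carried by $\bar{A}$ is $(-1)^{N+1}=-\prod_{j=1}^{k}(-1)^{t_j}$. Consequently this block contributes
\[
-\Bigl(\prod_{j=1}^{k-1}\sum_{t_j\in\{2,3\}}(-1)^{t_j}q^{j t_j}\Bigr)\Bigl(\sum_{t_k\in\{1,2\}}(-1)^{t_k}q^{k t_k}\Bigr)
=-\Bigl(\prod_{j=1}^{k-1}(q^{2j}-q^{3j})\Bigr)(-q^{k}+q^{2k}).
\]
Factoring $q^{2j}$ out of the $j$-th factor and $q^{k}$ out of the last one leaves $q^{\,2(1+2+\cdots+(k-1))+k}\prod_{j=1}^{k}(1-q^j)=q^{k^2}(q)_k$, and $2(1+\cdots+(k-1))+k=k^2=\tfrac{k(k+1)}{2}+\tfrac{k(k-1)}{2}$, which is the numerator.

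Multiplying the two blocks gives $(q)_k q^{k^2}/(aq^{k+1})_k$, i.e. precisely the claimed identity. I expect the only genuinely delicate point to be the bookkeeping of the sign: one must check that altering the multiplicity of a single part that is $\le k$ by $\pm1$ always flips the parity of $N$, so that the alternating sum over the partitions enumerated by $A_{k,m}$ really telescopes to the signed product displayed above, and one must keep the convention in (i) that $k$ actually appears — without it an unwanted factor $1-q^{k}+q^{2k}$ replaces the clean $-q^{k}(1-q^{k})$ and the right-hand side no longer factors through $(q)_k$. Everything else is a routine geometric-series computation together with the identity $(q)_{k-1}(1-q^{k})=(q)_k$.
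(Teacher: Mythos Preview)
Your argument is correct and follows the same factor-by-factor reading of the product that the paper uses; the paper does not give a formal proof but merely illustrates the interpretation for $k=1,2,3$ via the polynomials $f_k(x)$, whereas you carry out the general computation. You are also right to flag that condition~(i) must be read as ``$k$ appears once or twice'' rather than ``at most twice'' --- the paper's own examples $f_1,f_2,f_3$ confirm this reading, and without it the numerator would pick up the spurious factor $1-q^{k}+q^{2k}$ you identify instead of $-q^{k}(1-q^{k})$.
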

We write out the numerator of the right side of (3.1) for the reader to show the weight associated with parts that are $\le k$ for the sake of clarity.
Define the polynomial in $x$ by $f_k(x):=(x;x)_kx^{1+1+2+2+\cdots +(k-1)+(k-1)+k}.$ Then \begin{equation}f_1(x)=x^{1}-x^{1+1}.\end{equation} \begin{equation}f_2(x)=x^{1+1+2}-x^{1+1+1+2}-x^{1+1+2+2}+x^{1+1+1+2+2}.\end{equation}\begin{equation}f_3(x)=x^{2+2+1+1+3}+x^{1+1+1+2+2+2+3}+x^{1+1+1+2+2+3+3}\end{equation}
$$+x^{1+1+2+2+2+3+3}-x^{1+1+1+2+2+3}-x^{1+1+2+2+2+3}-x^{1+1+2+2+3+3}-x^{1+1+1+2+2+2+3+3}.$$
It is seen from (3.2)--(3.4) that the weight is $+1$ when the number of parts is odd and $-1$ if the number of parts is even.
\par Put 
\begin{equation} \sum_{m,k\ge0}\bar{A}_{k,m}(n)=B(n),\end{equation}
and
\begin{equation} \sum_{m,k\ge0}(-1)^{m+k}\bar{A}_{k,m}(n)=\bar{B}(n).\end{equation}
\begin{theorem} We have,
\begin{equation}\sum_{n\ge0}\frac{(q^2;q^2)_n(-1)^nq^{n^2}}{(-q)_{2n}}=\sum_{n\ge0}\frac{(q)_n(-1)^nq^{n(n+1)/2+n(n-1)/2}}{(-q^{n+1})_{n}}=\sum_{n\ge0}\bar{B}(n)q^n,\end{equation}
\begin{equation}\sum_{n\ge0}\frac{(q)_nq^{n^2}}{(q^{n+1})_{n}}=\sum_{n\ge0}B(n)q^n.\end{equation}
\end{theorem}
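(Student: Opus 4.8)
The plan is to read (3.7) and (3.8) straight off Lemma 3.1, using only elementary rearrangements of $q$-Pochhammer symbols; Lemma 2.1 is what afterwards identifies the left-hand series with the ternary-form sums in (2.7)--(2.8), but it plays no role in the equalities as stated. The first step is to simplify the numerator in (3.1): since $1+1+2+2+\cdots+(k-1)+(k-1)+k = k(k-1)+k = k^2$, equivalently $\tfrac{k(k+1)}{2}+\tfrac{k(k-1)}{2}=k^2$, the right side of (3.1) equals $\dfrac{(q)_k\,q^{k^2}}{(aq^{k+1})_k}$.

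Next I would prove (3.8). Specializing (3.1) at $a=1$ gives, for every $k\ge0$, the identity $\sum_{n,m\ge0}\bar{A}_{k,m}(n)q^n=\dfrac{(q)_k\,q^{k^2}}{(q^{k+1})_k}$. Conditions (i)--(ii) of Lemma 3.1 force the parts $1,2,\dots,k-1$ to occur at least twice and $k$ to be present, so $\bar{A}_{k,m}(n)=0$ whenever $n<k^2$, while always $m\le n$; hence for each fixed $n$ only finitely many pairs $(k,m)$ contribute and all the sums below converge as formal power series. Summing the displayed identity over $k\ge0$ and using the definition (3.5) of $B(n)$ yields $\sum_{n\ge0}B(n)q^n=\sum_{k\ge0}\dfrac{(q)_k\,q^{k^2}}{(q^{k+1})_k}$, which on relabeling $k\mapsto n$ is precisely (3.8).

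For (3.7) there are two equalities. The first is a term-by-term identity: from $(-q)_{2n}=(-q)_n(-q^{n+1})_n$ and $(q^2;q^2)_n=(q)_n(-q)_n$ one gets $\dfrac{(q^2;q^2)_n}{(-q)_{2n}}=\dfrac{(q)_n}{(-q^{n+1})_n}$, and since $n(n+1)/2+n(n-1)/2=n^2$ the two general summands coincide. For the second equality, specialize (3.1) at $a=-1$ to obtain $\sum_{n,m\ge0}(-1)^m\bar{A}_{k,m}(n)q^n=\dfrac{(q)_k\,q^{k^2}}{(-q^{k+1})_k}$; multiplying by $(-1)^k$, summing over $k\ge0$ (valid by the same finiteness remark), and using the definition (3.6) of $\bar{B}(n)$ gives $\sum_{n\ge0}\bar{B}(n)q^n=\sum_{k\ge0}\dfrac{(q)_k(-1)^k q^{k^2}}{(-q^{k+1})_k}$, which is the middle member of (3.7) after relabeling.

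I do not expect a genuine obstacle once Lemma 3.1 is granted. The two points that need care are the sign bookkeeping --- checking that the ``odd minus even number of parts $\le k$'' weighting built into $\bar{A}_{k,m}$ is exactly what the factor $(q)_k$ in (3.1) contributes, for which the explicit expansions $f_1,f_2,f_3$ in (3.2)--(3.4) are the intended sanity check --- and the finiteness observation above that legitimizes interchanging the sums over $n$, $m$, and $k$.
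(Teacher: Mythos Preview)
Your proposal is correct and follows exactly the route the paper intends: the paper gives no explicit proof of Theorem~3.1, but places it immediately after Lemma~3.1 and the definitions (3.5)--(3.6) precisely because specializing $a=1$ and $a=-1$ in (3.1), simplifying $k(k+1)/2+k(k-1)/2=k^2$, and summing over $k$ yields (3.8) and the second equality of (3.7), while the first equality of (3.7) is the termwise identity $(q^2;q^2)_n/(-q)_{2n}=(q)_n/(-q^{n+1})_n$ you wrote out. Your finiteness remark and the sign bookkeeping are the only points requiring care, and you have handled both.
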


As a natural consequence of our partition function having an intimate connection with a ternary quadratic forms, we are able to state the following result.
\begin{theorem} For $n>1,$ if $n$ is not of the form $Q_1(l,m,j):=2l^2+m(m+1)/2-j(j\pm1)+2lj$ for $2|j|\le m, l>0,$ then $\bar{B}(n)=-\bar{B}(n-2)$ and $\bar{B}(n)=0.$ Furthermore, $\bar{B}(n)\ll n^{3/4}$ for all $n\ge0.$ Similarly, for $n>1$ if $n$ is not of the form $Q_2(l,m,j):=l^2+ m(3m+1)/2-j(j\pm1)/2+lj$ for $|j|\le m, l>0,$ or $Q_3(l,m,j):=l^2+m(3m-1)/2-j(j\pm1)/2+lj$ for $|j|< m, l>0,$ then $B(n)=-B(n-1)$ and $B(n)=0.$ Furthermore, $B(n)\ll n^{3/4}$ for all $n\ge0.$
\end{theorem}
\begin{proof} We first remark that in the case of $n=0$ in (2.4) and (2.6), the inner sums are special cases of applying the unit Bailey pair to their corresponding conjugate pair from [7, pg.267, eq.(2.6)--(2.7), $z=1$], giving

\begin{equation}\sum_{m\ge0}q^{m(m+1)/2}\sum_{2|j|\le m}(-1)^jq^{-j(j-1)}=1,\end{equation}
in the case of $\bar{B}(n),$ and
\begin{equation}\sum_{m\ge0}(-1)^mq^{m(3m+1)/2}(1-q^{2m+1})\sum_{|j|\le m}(-1)^jq^{-j(j-1)/2}=1,\end{equation}
in the case of $B(n).$ The first part of the theorem follows from considering the first term $n=0$ of the sum on the right hand side of (2.7), and the second part follows from considering the first term $n=0$ of the sum on the right hand side of (2.8).\par For the estimate, we may observe that the second sum in (2.7) may be written in the form
\begin{equation}\sum_{n\ge0}q^{2n^2}(1-q^{4n+2})\sum_{m\ge0,j\in\mathbb{Z}}(-1)^jq^{m(m+1)/2+j(j+1)+(2m+1)|j|+2jn}.\end{equation}
This can be observed from shifting summation indices or tracing the origins of the proofs of our Bailey pairs from their original conjugate pair. Similarly, the second sum in (2.8) may also be rewritten as a $q$-series related to a positive ternary quadratic form. At this point, we recall our observation from the introduction that 
$$\sum_{\substack{(x_1,x_2,x_3)\in\mathbb{Z}^3\\ Q(x_1,x_2,x_3)=n}}1\ll n^{3/4}.$$
Hence, defining a weight $\omega(x_1,x_2,x_3)$ associated with inequalities on the summation indices $(x_1,x_2,x_3)$ which assumes values from the set $\{0,1,-1\},$ we see that
$$\sum_{\substack{(x_1,x_2,x_3)\in\mathbb{Z}^3\\ Q(x_1,x_2,x_3)=n}}\omega(x_1,x_2,x_3)\ll\sum_{\substack{(x_1,x_2,x_3)\in\mathbb{Z}^3\\ Q(x_1,x_2,x_3)=n}}1\ll n^{3/4}.$$ The result now follows from these observations when equating coefficients of $q^n$ in Lemma 2.1.
\end{proof} 
\begin{corollary} If $n$ is of the form $m(m+1)/2$ and not expressible as $Q_1(l,m,j)$ for $2|j|\le m, j\neq0, l>0,$ then $\bar{B}(n)=1.$ Furthermore, if $n$ is not expressible as $Q_2(l,m,j)$ for $|j|\le m, j\neq0, l>0,$ and is of the form $ m(3m+1)/2$ then $B(n)=(-1)^m,$ and if not expressible as $Q_3(l,m,j)$ for $|j|< m, j\neq0, l>0,$ and is of the form $ m(3m-1)/2$ then $B(n)=(-1)^{m+1}.$
\end{corollary}
\begin{proof} This follows from Theorem 3.3 by considering the case $j=0$ and the case $l=0.$ Having already noted the case $n=0$ in (3.9)--(3.10) we move on to the $j=0$ case. Noting that $\sum_{n\ge0}q^{n^2}(1-q^{2n+1})=1,$ leads one to consider the remaining series in $m$ for (2.7) and (2.8) and the result follows. \end{proof}
We remark about the existence of $n$ of the form $m(m+1)/2$ not expressible as $Q_1(l,m,j)$ for $2|j|\le m, j\neq0, l>0.$ For an example, consider the triangular number $3.$ Since $l>0,$ we are forced to select $l=1,$ giving $2l^2=2,$ as $j>0$ implies the smallest $m$ possible is $2,$ since  $2|j|\le m,$ and the only possibility is $Q_1(0,m,0)=m(m+1)/2=3.$ The only partition for $\bar{B}(3)$ occurs when $k=1$ in Lemma 3.1, there are no parts $<k$ and $1$ can only appear once, because only one $2$ may appear $>k$ and $\le 2k.$ The number of parts $\le k$ is odd, so the weight becomes $(-1)^{m+k}=(-1)^{1+1}=1,$ giving $\bar{B}(3)=1.$ For the triangular number $6,$ we can consider $l=1$ and possibly $l=2.$ For considering $\min_{j,m,2|j|\le m}Q_1(3,m,j),$ we need find the smallest of $2(3)^2+m(m+1)/2-j(j-1)+6j,$ which occurs when $j=-1, m=2,$ giving $13.$ If $l=1$ with $m=3,$ and $j=-1$ then we get $4$ and $6.$ This expression for $6$ occurs from the sum
$$-\sum_{l\ge1}q^{2l^2}\sum_{m\ge0}q^{m(m+1)/2}\sum_{2|j|\le m}(-1)^jq^{-j(j+1)+2lj},$$
which carries a weight of $+1$ when $(l,m,j)=(1,3,-1).$
Likewise $Q_1(1,2,1)=7,$ and also $Q_1(2,2,-1)=5.$ This shows $6$ is not only expressible as $Q_1(0,3,0).$ Consider the triangular number $10.$ Since $l>0,$ we are forced to select $l=1,$ or $l=2$ due to $2l^2.$ If $l=1,$ then $m=2$ with $j=1$ fails, but $m=3$ with $j=1$ gives $Q_1(1,3,1)=2+6+0+2=10.$

1390 Bumps River Rd. \\*
Centerville, MA
02632 \\*
USA \\*
E-mail: alexpatk@hotmail.com, alexepatkowski@gmail.com
\end{document}